\newtheorem{theorem}{Theorem}
\newtheorem{corollary}{Corollary}
\newtheorem{lemma}{Lemma}
\newtheorem{question}{Question}
\numberwithin{equation}{section}
\newtheorem{conjecture}{Conjecture}
\theoremstyle{definition}
\newtheorem{definition}{Definition}
\newcommand{\red}[1]{\color{black}{#1}}
\begin{document}
\title{On the abelian complexity of generalized Thue-Morse sequences\corref{t1}}
\tnotetext[t1]{This work was supported by NSFC (Nos. 11701202, 11431007, 11871295), the Youth innovation talent  project  of colleges and universities in  Guangdong (No. 2016KQNCX229). Many thanks to the referees for a very careful reading and useful suggestions.}

\author[hzau]{Jin Chen\corref{cor1}}
\ead{cj@mail.hzau.edu.cn}
\cortext[cor1]{Corresponding author.}

\author[hust]{Zhi-Xiong Wen}
\ead{zhi-xiong.wen@hust.edu.cn}

\address[hzau]{College of Science, Huazhong Agricultural University, Wuhan 430070, China.}
\address[hust]{School of Mathematics and Statistics, Huazhong University of Science and Technology, Wuhan, 430074, China.}

\begin{abstract}
In this paper, we study the abelian complexity $\rho_n^{ab}(\mathbf{t}^{(k)})$ of generalized Thue-Morse sequences $\mathbf{t}^{(k)}$ {\red for every integer $k\geq 2.$} We obtain the exact value of $\rho_n^{ab}(\mathbf{t}^{(k)})$ for every integer $n\geq k$. Consequently,  $\rho_n^{ab}(\mathbf{t}^{(k)})$ is ultimately periodic with period $k$. Moreover, we show that the abelian complexities of a class of infinite sequences are {\red automatic sequences}.
\end{abstract}

\begin{keyword}
generalized Thue-Morse sequence\sep  abelian complexity\sep automatic sequence
\MSC[2010]{11B85}
\end{keyword}

\maketitle

\section{Introduction}
{Recently the study of the abelian complexity of infinite words was initiated by G. Richomme, K. Saari, and L. Q. Zamboni \cite{RSZ11}. For example, the abelian complexity functions of some notable sequences, such as the Thue-Morse sequence and all Sturmian sequences, were studied in \cite{RSZ11} and \cite{CH73} respectively. There are also many other works including the unbounded abelian complexity, see \cite{BBT11,CLW18,LCWW17,MR13,RSZ10} and references therein.  

{\red Fixed an integer  $k\geq 2$}. Let $\sigma_k$ be the morphism $0\mapsto 01\cdots (k-1), 1\mapsto 12\cdots (k-1){\red 0}, \cdots, k-1 \mapsto (k-1)0\cdots (k-2)$ on $\{0,1,\cdots,k-1\}$ and $\mathbf{t}^{(k)}:=\sigma_k^{\infty}(0)$. The infinite sequence $\mathbf{t}^{(k)}$ is a generalized Thue-Morse sequence with respect to $k$. Trivially, $\mathbf{t}^{(2)}$ is the infamous Thue-Morse sequence. Further, $\mathbf{t}^{(k)}$ is $k$-automatic  and uniformly recurrent (see \cite{F02}).  Recall that a sequence $\mathbf{w}=w_0w_1w_2\cdots$ is a \emph{$k$-automatic} sequence if its \emph{$k$-kernel}
$\{ (w_{{k^e}n + c})_{n \ge 0}~|~ {e \ge 0,0 \le c < k^e}\} $
{\red is} finite. If the $\mathbb{Z}$-module generated by its $k$-kernel is
finitely generated, then $\mathbf{w}$ is a \emph{$k$-regular} sequence. 

Adamczewski \cite{A03} obtained sufficient and necessary {\red conditions  for} bounded abelian complexity. There is {\red the} natrual question {\red of} finding some optimal condition for {\red the} periodic or automatic  abelian complexity.  The abelian complexity of $\mathbf{t}^{(3)}$ has been studied by Kabor\'{e} and Kient\'{e}ga \cite{KK2017}, in which they show that $\rho_n^{ab}({\mathbf{t}^{(3)}})$ is ultimately periodic with period $3$. In this paper, we are interested in the abelian complexity function
$\rho_n^{ab}(\mathbf{t}^{(k)})$ for every $n\geq k \geq 2$. The explicit value is obtained in the following theorem. 
\begin{theorem}\label{thm:main1}
For all integer $n\geq k \geq 2$, {\red{if}} $n\equiv r \mod k,$ {\red then} we have

\[\rho_n^{ab}(\mathbf{t}^{(k)}) =
\begin{cases} 
\frac{1}{4}k(k^2-1)+1  & \text{ if $k$ is odd and $r= 0$},  \\
\frac{1}{4}k(k-1)^2+k  & \text{ if $k$ is odd and $r \neq 0$},  \\
\frac{1}{4}k^3+1  & \text{ if $k$ is even and $r= 0$},  \\
\frac{1}{4}k(k-1)^2+\frac{5}{4}k  & \text{ if $k$ is even and $r\neq 0$ is even},  \\
\frac{1}{4}k^2(k-2)+k  & \text{ if $k$ is even and $r\neq 0$ is odd}. 
\end{cases} \] 
\end{theorem}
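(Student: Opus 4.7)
The plan is to exploit two structural features of $\sigma_k$: it is $k$-uniform, and every image $\sigma_k(i) = i(i+1)\cdots(i+k-1) \pmod{k}$ has the same Parikh vector $\mathbf{1} := (1,1,\ldots,1)$, so that every full block of length $k$ always contributes $\mathbf{1}$ to any factor containing it.

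Writing $n = mk + r$ with $0 \le r < k$, every factor $u$ of length $n$ admits a decomposition $u = P\,\sigma_k(v)\,S$, where $|P|, |S| \in \{0, 1, \ldots, k-1\}$, $P$ is a suffix of $\sigma_k(a)$, $S$ is a prefix of $\sigma_k(b)$, and $avb$ is a factor of $\mathbf{t}^{(k)}$. The length constraint $|P| + k|v| + |S| = n$ forces either $|v| = m$ with $|P| + |S| = r$ (where one of $P, S$ may be empty) or $|v| = m - 1$ with $|P| + |S| = r + k$ and both $P, S$ nonempty. A length-$\ell$ prefix of $\sigma_k(i)$ has Parikh vector equal to the indicator $\chi_{[i,\, i+\ell-1]}$ of a cyclic interval in $\mathbb{Z}/k\mathbb{Z}$, and a length-$\ell$ suffix of $\sigma_k(i)$ equals $\chi_{[i-\ell,\, i-1]}$, yielding
\[
\psi(u) \;=\; |v|\,\mathbf{1} \;+\; \chi_{[a-|P|,\, a-1]} \;+\; \chi_{[b,\, b+|S|-1]}.
\]
Counting $\rho_n^{ab}(\mathbf{t}^{(k)})$ thus reduces to counting distinct vectors of the form $\delta\mathbf{1} + \chi_{I_1} + \chi_{I_2}$, with $\delta \in \{0,1\}$ distinguishing the two $|v|$-cases and $(I_1, I_2)$ ranging over admissible cyclic intervals.

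Before tackling the combinatorics, I would prove a realizability lemma: for every $n \ge k$ and every Parikh vector produced by the canonical form above for some admissible tuple $(a, b, |P|, |S|)$, at least one such tuple corresponds to an actual factor of $\mathbf{t}^{(k)}$. Using uniform recurrence and the way $\sigma_k$ generates all biletters of $\mathbb{Z}/k\mathbb{Z}$, this reduces to showing that any desired $(a,b)$ can be realized as the endpoints of a length-$m$ (or length-$(m-1)$) factor, which follows from iterating $\sigma_k$ an appropriate number of times. The enumeration is then purely combinatorial.

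The heart of the proof is counting distinct sums $\chi_{I_1} + \chi_{I_2}$ of indicator vectors of cyclic intervals of prescribed total length ($r$ or $r + k$), together with the identification produced by the $\delta\mathbf{1}$ shift. I would split into subcases according to the relative position of $I_1$ and $I_2$: disjoint, partially overlapping, nested, or jointly covering $\mathbb{Z}/k\mathbb{Z}$. The parity of $k$ and the residue $r$ both control how many distinct configurations collapse to the same vector: when $k$ is even, two intervals of length $k/2$ can be exactly complementary, and for given parity of $r$ certain length partitions $(|P|, |S|)$ are unavailable. The five formulas in the theorem then arise by tabulating, in each regime, the number of triples (overlap type, cyclic shift, length partition) that produce distinct vectors. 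I expect this systematic tally, particularly the subcase of even $k$ with even nonzero $r$ where both parity effects interact, to be the main obstacle.
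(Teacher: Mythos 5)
Your setup is essentially the paper's own: the decomposition $u=P\,\sigma_k(v)\,S$, the observation that $\psi(u)$ equals a multiple of $\mathbb{I}_k$ plus two indicator vectors of cyclic intervals of $\mathbb{Z}/k\mathbb{Z}$ (this is exactly the ``Parikh vector circle'' of Figure \ref{fig:Parikh_circle}), and the realizability lemma you postpone is precisely Lemma \ref{lem:tkboudary}, i.e.\ $\partial\mathcal{F}_n(\mathbf{t}^{(k)})=\Sigma_k^2$ for all $n\ge 2$, proved by a short induction on $n$ using $\sigma_k(i)=i(i+1)\cdots(i+k-1)\bmod k$. So there is no divergence of route to report.

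The genuine gap is that you stop exactly where the theorem's content begins: the five closed formulas are the output of the enumeration you defer, and a plan that labels that enumeration ``the main obstacle'' has not proved the statement. To close it, the organizing principle (used in Lemmas \ref{lem:main1-req0} and \ref{lem:main1-rneq0}) is that a sum $\chi_{I_1}+\chi_{I_2}$ of two cyclic-interval indicators has its maximal entries on a single cyclic block (the overlap) and its minimal entries on a single cyclic block (the complement of the union), so a Parikh vector in your family is determined by the length $t$ and position of the block of top values together with the position of the block of bottom values; one then sums $k\cdot(\text{number of admissible positions for the second block})$ over $t$. Two points need explicit care and are where the parity of $k$ and $r$ enter: (i) the factor $\tfrac12$ from the symmetry $I_1\leftrightarrow I_2$, which degenerates when the two blocks have equal length (e.g.\ two complementary intervals of length $k/2$ when $k$ is even); and (ii) the collision between your $\delta=0$ family (entries in $\{0,1,2\}$, total interval length $k+r$) and your $\delta=1$ family (entries in $\{1,2,3\}$ after adding $\mathbb{I}_k$, total interval length $r$): a vector from the second family with disjoint intervals and with the two intervals adjacent has exactly $r$ twos and no zeros and coincides with a vector from the first family whose intervals cover $\mathbb{Z}/k\mathbb{Z}$; these $k$ vectors must be counted once, while the max-entry-$3$ vectors (overlapping intervals) cannot occur in the first family and are counted separately. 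Without carrying out this tabulation and verifying it reproduces, e.g., $\tfrac14 k(k-1)^2+\tfrac54 k$ in the case $k$ even and $r\neq 0$ even, the proof is not complete.
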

Consequently, we have the following corollary.
\begin{corollary}\label{cor:1}
$\{\rho_n^{ab}(\mathbf{t}^{(k)})\}_{n \geq 1}$ is ultimately periodic with  period $k$.
\end{corollary}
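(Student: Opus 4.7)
The plan is to derive the corollary as an immediate consequence of Theorem~\ref{thm:main1}. The key observation is that the explicit formula given there depends on $n$ only through the residue $r = n \bmod k$: once $k$ is fixed, the five cases of the piecewise formula are determined entirely by (i) whether $r = 0$, and if $r \neq 0$, (ii) whether $r$ is even or odd. All of these conditions are functions of $r$ alone.

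Given this, I would argue as follows. For any $n \geq k$, the integer $n+k$ also satisfies $n+k \geq k$ and $(n+k) \bmod k = n \bmod k = r$. Applying Theorem~\ref{thm:main1} to both indices, one falls into the same case of the piecewise definition and obtains the same value, so $\rho_{n+k}^{ab}(\mathbf{t}^{(k)}) = \rho_n^{ab}(\mathbf{t}^{(k)})$ for every $n \geq k$. By definition, this is exactly the statement that $\{\rho_n^{ab}(\mathbf{t}^{(k)})\}_{n \geq 1}$ is ultimately periodic with period $k$, the possible pre-period being confined to the initial indices $1 \leq n \leq k-1$.

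There is essentially no obstacle to overcome here, since all of the work is concentrated in the proof of Theorem~\ref{thm:main1}; the corollary is a one-line reading of that formula. In the final write-up I would therefore state the argument in at most a couple of sentences, emphasizing that periodicity with period $k$ is visible directly from the dependence of the formula on $n \bmod k$.
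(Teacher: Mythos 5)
Your proposal is correct and matches the paper exactly: the corollary is stated there as an immediate consequence of Theorem~\ref{thm:main1}, since the explicit formula depends on $n$ only through $r = n \bmod k$, giving $\rho_{n+k}^{ab}(\mathbf{t}^{(k)}) = \rho_n^{ab}(\mathbf{t}^{(k)})$ for all $n \geq k$. No further comment is needed.
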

{\red In fact, we may generalize Corollary \ref{cor:1} as described below in Theorem \ref{thm:main2}, without the need to compute the exact value of the abelian complexity.}
\iffalse
 In fact, if we need not to compute the exact value of the abelian complexity, we cloud generalize Corollary \ref{cor:1} in more wide case. 
\fi
Let $\mathcal{A}_1$ and $\mathcal{A}_2$ be two alphabets. Given any  uniformly recurrent  $k$-automatic sequence $\mathbf{w} \in {\mathcal{A}_1}^\mathbb{N}$, define a projection $\pi:{\mathcal{A}_1}^{*} \mapsto {\mathcal{A}_2}^{*}$ satisfying that 
\[ \text{for every pair of letters }a,b \in {\mathcal{A}_1}, \pi(a) \sim_{ab} \pi(b)\]
{\red where $\sim_{ab}$ will be defined in Section $2.$} Now we have the following theorem.

\begin{theorem}\label{thm:main2}
Fore every infinite sequence $\mathbf{w} \in {\mathcal{A}_1}^\mathbb{N}$ satisfying that {\red the corresponding boundary sequence} $\{\partial \mathcal{F}_n(\mathbf{w})\}_{n\geq 2}$ {\red (defined in Section $2$)} is $k$-automatic, let $\mathbf{u}=\pi(\mathbf{w}) \in {\mathcal{A}_2}^{\mathbb{N}}$ {\red be defined as prescribed above}, then the abelian complexity $\{\rho_n^{ab}(\mathbf{u})\}_{n\geq 1}$ is a $k$-automatic sequence.

\end{theorem}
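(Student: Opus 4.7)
The plan is to reduce the abelian complexity of $\mathbf{u}$ to the boundary data of $\mathbf{w}$ and then invoke closure properties of $k$-automatic sequences. First, I would unpack the hypothesis on $\pi$: the condition $\pi(a)\sim_{ab}\pi(b)$ for all $a,b\in\mathcal{A}_1$ should force the images $\pi(a)$ to share a common abelian invariant (in particular, a common length $\ell$ and, if $\sim_{ab}$ is abelian equivalence, a common Parikh vector $v$). This makes $\pi$ ``abelian-uniform'', which is exactly the feature that will allow the interior of any $\pi$-image block to contribute a Parikh vector depending only on a length parameter, not on the specific letters.

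Next I would analyze factors of $\mathbf{u}$ by the standard block decomposition. Any factor $u$ of $\mathbf{u}$ of length $n$ can be written uniquely as $u=s\cdot\pi(f)\cdot t$, where $f$ is a factor of $\mathbf{w}$, $s$ is a proper suffix of some $\pi(a)$, and $t$ is a proper prefix of some $\pi(b)$, with $|s|+\ell|f|+|t|=n$ and $0\le|s|,|t|<\ell$. Because $\pi$ is abelian-uniform, the Parikh vector of $\pi(f)$ equals $|f|\cdot v$ and hence depends only on $|f|$. Therefore the Parikh vector of $u$ is determined by the triple $(|f|,\,s,\,t)$, together with the constraint that $asfb$ or an analogous word is a factor of $\mathbf{w}$. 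There are only finitely many possible $(s,t)$, so the abelian class of $u$ is effectively determined by $|f|$ and by which factor $f$ of $\mathbf{w}$ of that length one reads, up to a bounded amount of fringe data.

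I would then express $\rho_n^{ab}(\mathbf{u})$ explicitly in terms of the boundary sequence $\partial\mathcal{F}_m(\mathbf{w})$ evaluated at $m$ close to $n/\ell$. The point is that the set of Parikh vectors of factors of length $n$ in $\mathbf{u}$ is obtained from the set (or boundary structure) of Parikh vectors of factors of length $m$ of $\mathbf{w}$ by adding, for each admissible pair $(s,t)$, the Parikh vectors of $s$ and $t$ and counting duplicates. Since there are only finitely many pairs $(s,t)$ and only finitely many admissible context letters, this is a fixed finite-state computation applied to the boundary data at argument $m$ (and possibly $m\pm 1$) together with a residue of $n$ modulo $\ell$. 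Consequently $\rho_n^{ab}(\mathbf{u})$ is obtained from the $k$-automatic sequence $\{\partial\mathcal{F}_n(\mathbf{w})\}$ by a finite-state transduction, and therefore $\{\rho_n^{ab}(\mathbf{u})\}_{n\ge 1}$ is itself $k$-automatic, by standard closure properties of $k$-automatic sequences (equivalently, by first-order definability in $(\mathbb{N},+,V_k)$).

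The main obstacle is the middle step: establishing precisely that the multi-set of Parikh vectors of factors of length $n$ of $\mathbf{u}$ is faithfully recoverable from $\partial\mathcal{F}_m(\mathbf{w})$ together with bounded fringe information, without over- or under-counting. One must check that whenever two block-decompositions $(|f|,s,t)$ and $(|f'|,s',t')$ give the same Parikh vector in $\mathbf{u}$, this coincidence is already visible at the level of the boundary data of $\mathbf{w}$, and conversely that the admissibility conditions (i.e.\ which $f$ can actually occur with fringes $s,t$ and compatible neighboring letters in $\mathbf{w}$) are encoded in the automatic structure. Once this combinatorial bookkeeping is in place, the $k$-automaticity of the abelian complexity follows routinely.
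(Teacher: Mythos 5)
Your proposal is correct and follows essentially the same route as the paper: decompose each length-$n$ factor of $\mathbf{u}$ into fringes plus a $\pi$-image block, use the abelian-uniformity of $\pi$ to reduce the Parikh vector to the fringe data and the boundary word of the corresponding factor of $\mathbf{w}$ (this is the paper's Lemma \ref{lem:wtoboudary}), and then conclude by closure of $k$-automatic sequences under $1$-uniform transducers and under merging of arithmetic subsequences. The ``combinatorial bookkeeping'' you flag as the main obstacle is exactly what the paper's Lemma \ref{lem:toboundary} carries out (including the two-sided fringe case needing both $\partial\mathcal{F}_{m+1}$ and $\partial\mathcal{F}_{m+2}$, which your ``$m\pm1$'' remark anticipates).
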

\iffalse
It follow from the above theorem that the $k$-regularity of abelian complexity holds for a class of $k$-automatic sequences.

\begin{corollary}\label{cor:2}
Given any uniformly recurrent $k$-automatic sequence $\mathbf{w}=w_0w_1\cdots = \sigma^\infty(w_0) \in {\mathcal{A}_1}^{\mathbb{N}}$ satisfying that $\sigma(a)\sim_{ab} \sigma(b)$ for every pair of letters $a,b\in {\mathcal{A}_1},$ then the abelian complexity $\{\rho_n^{ab}(\mathbf{w})\}_{n\geq 1}$ is a $k$-automatic sequence.
\end{corollary}
\fi
This paper is organized as follows. In Section 2, we give some preliminaries and notations. In Section 3, we prove Theorem \ref{thm:main1}. In Section 4, we prove Theorem \ref{thm:main2}. 

\section{Preliminaries}
An \emph{alphabet} $\mathcal{A}$ is a finite and non-empty set (of symbols) whose elements are called \emph{letters}. A (finite)
\emph{word} over the  alphabet $\mathcal{A}$ is a concatenation of letters in $\mathcal{A}$. The concatenation of two words ${u} =
u_{0}u_{1} \cdots {\red u_m}$ and ${v} = v_{0}v_{1} \cdots v_{n}$ is the word ${uv} = u_{0}u_{1} \cdots u_{m}v_{0}v_{1} \cdots v_{n}$. The set of all finite
words over $\mathcal{A}$ including the \emph{empty word} $\varepsilon $ is denoted by $\mathcal{A}^*$. An infinite word $\mathbf{w}$ is an
infinite sequence of letters in $\mathcal{A}$. The set of all infinite words over $\mathcal{A}$ is denoted by $\mathcal{A}^{\mathbb{N}}$. Let $\Sigma_k$ be $\{0,1,\cdots, k-1\}$ for every $k\geq 1.$

The \emph{length} of a finite word ${w}\in \mathcal{A^*}$, denoted by $|w|$, is the number of letters contained in $w$. We set $\left|
\varepsilon  \right| = 0$. For any word $u\in\mathcal{A}^{*}$ and any letter $a \in \mathcal{A}$, let $|{u}|_a$ denote the number of occurrences
of $a$ in ${u}$.

A word $w$ is a factor of a finite (or an infinite) word $v$, written by $w\prec v$ if there exist a finite word $x$ and a finite (or an
infinite) word $y$ such that $v=xwy$. When $x=\varepsilon$,  ${w}$ is called a \emph{prefix} of ${v}$, denoted by ${w} \triangleleft {v}$; when
$y=\varepsilon$, $ w$ is called a suffix of ${v}$, denoted by ${w} \triangleright {v}$. For a finite word $u=u_0\cdots u_{n-1},$ denote by $u[i,j]$ the factor of $u$ from the position $i$ to $j$, i.e., $u[i,j]=u_{i-1}\cdots u_{j-1}.$ Fixed {\red an} integer $k\geq 2$, let $u \mod k:=(u_0\mod k)(u_1\mod k)\cdots (u_{n-1}\mod k).$

For a real number $x$, let $\lfloor x \rfloor$ (resp. $\lceil x \rceil$)  be the greatest {\red(resp. smallest)} integer that is less (resp. larger) than  or equal to  $x.$   
\iffalse
For every natural number $n$ and some positive integer $b\geq 2,$ set $(n)_b$ be the regular $b$-ary expansion of $n$.
\fi 
\subsection{Abelian complexity}
Given an alphabet $\mathcal{A}$. Let $\mathbf{w}=w_{0}w_{1}w_{2}\cdots \in \mathcal{A}^{\mathbb{N}}$ be an infinite word. Denote by ${\mathcal{F}_{n}(\mathbf{w})}$ the set of all
factors of $\mathbf{w}$ of length $n$, i.e., \[{\mathcal{F}_{n}(\mathbf{w})}: = \{w_{i}w_{i + 1}\cdots w_{i + n - 1} : i \geq 0 \}.\]
In fact, when $\mathbf{w}$ is a finite word, ${\mathcal{F}_{n}(\mathbf{w})}$ is still well defined. Write $\mathcal{F}_{\mathbf{w}}:=\cup_{n\geq 1}{\mathcal{F}_{n}(\mathbf{w})}$.  The \emph{subword complexity function} $\rho_{n}(\mathbf{w})$ of $\mathbf{w}$ is defined by
\[\rho_{n}(\mathbf{w}) := \# \mathcal{F}_{n }(\mathbf{w}).\]

Fixed the alphabet $\mathcal{A}=\{a_0,\cdots,a_{q-1}\}$ and a factor $v$ of an infinite word $\mathbf{w}$, the Parikh vector of $v$ is the $q$-uplet \[\psi({\red v}) := (|v|_{a_0} , |v|_{a_1} , \cdots, |v|_{a_{q-1}} ).\] Denote by  ${\red \Psi_{n}(\mathbf{w})}$, the set of the Parikh vectors of the factors of length $n$ of ${\mathbf{\red w}}$:
\[ \Psi_{n}( \mathbf{w}) := \{\psi(v) : v \in {\mathcal{F}_{n}(\mathbf{w})}\}. \]

The \emph{abelian complexity function} of ${\mathbf{w}}$ is defined by 
\iffalse
the distinct number of the set of Parikh vectors of all the factors: 
\fi
\[ \rho^{ab}_{n}( \mathbf{w}) := \#\Psi_{n}( \mathbf{w}). \]
In fact, we could give {\red another} definition of the abelian complexity function induced by the abelian equivalence relation.  For the infinite word $\mathbf{w}$, given two factors $u,v$ we say $u$ is \emph{abelian  equivalent} to $v$ (write $u\sim_{ab} v$) if $\psi(u)=\psi(v).$ Now the abelian complexity function of $\mathbf{w}$ can be defined as follows:
\[\rho_{n}^{ab}(\mathbf{w}) := \# \{ \mathcal{F}_{n }(\mathbf{w})/{\sim_{ab}} \}.\]

In addition, before the proof of the main theorems, we give some notations which will be important through all the paper. Let $\mathbf{w} \in \mathcal{A}^{\mathbb{N}}$ be an infinite word, given a factor $u=u_0\cdots u_{n-1}$, denote by $\partial u$ the {\red{\emph{$1$-length boundary word}  consisting}} of the first and last letter of $u,$
\[ \partial u = u_0u_{n-1}.\]
For the {\red sake of} completeness, let $\partial u$ be itself $u$ when $|u|\leq 1.$ Define {\red the set of the \emph{boundary words}} of factors with length $n$ of $\mathbf{w}$ by
\[ \partial\mathcal{F}_{n}(\mathbf{w}) :=  \{\partial u : u \in {\mathcal{F}_{n}(\mathbf{w})} \}. \]

\section{Abelian complexity of $\mathbf{t}^{(k)}$ }
First we give some notations and lemmas. For every $n\geq k \geq 2,$ set $n=km+r$ for some $m\geq 1$ and $r=0,\cdots k-1.$ Let
\[ S_n(\mathbf{w}) := \bigcup_{ab \in \partial\mathcal{F}_{m+1}(\mathbf{w}) } \mathcal{F}_{r+k}\big(\sigma_k(ab)\big)  \] 

and
\[ G_n(\mathbf{w}) := \bigcup_{ab \in \partial\mathcal{F}_{m+2}(\mathbf{w}) } \mathcal{F}_{r}\big(\sigma_k(ab)[k-r+2,k+r-1]\big)  \] 
\begin{lemma}\label{lem:toboundary}
For every $n\geq k \geq 2,$ set $n=km+r$ for some $m\geq 1$ and $r=0,\cdots k-1.$  
If $r\leq 1,$ then we have
\[ \rho_{n}^{ab}(\mathbf{t}^{(k)}) =  \#\{ \psi(u): u\in S_n(\mathbf{t}^{(k)}) \}. \]
Otherwise{\red,} if $r\geq 2,$ then we have
\[ \rho_{n}^{ab}(\mathbf{t}^{(k)}) =  \#\{ \psi(u) \cup (\mathbb{I}_k+\psi(v)): u\in S_n(\mathbf{t}^{(k)}), v \in G_n(\mathbf{t}^{(k)}) \} \]
where $\mathbb{I}_k = (1,1,\dots,1)$ is the vector of length $k$ with all the element being the same $1.$		
\end{lemma}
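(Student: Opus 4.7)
The plan is to exploit the self-similar identity $\mathbf{t}^{(k)}=\sigma_k(\mathbf{t}^{(k)})$ together with the key Parikh identity $\psi(\sigma_k(c))=\mathbb{I}_k$ for every $c\in\Sigma_k$ (because each $\sigma_k(c)$ is a cyclic permutation of $0,1,\ldots,k-1$) in order to replace each length-$n$ factor $w$ of $\mathbf{t}^{(k)}$ by a much shorter \emph{compact representative} whose Parikh vector differs from $\psi(w)$ by a fixed multiple of $\mathbb{I}_k$. Since translating a set does not change its cardinality, $\rho_n^{ab}(\mathbf{t}^{(k)})$ can then be read directly off the family of compact representatives.

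Fix $w\in\mathcal{F}_n(\mathbf{t}^{(k)})$ starting at position $i=kj+s$ with $0\le s\le k-1$, and choose a factor $v$ of $\mathbf{t}^{(k)}$ beginning at position $j$ such that $w$ is a factor of $\sigma_k(v)$; take $|v|=m+1$ when $s+r\le k$ and $|v|=m+2$ when $s+r>k$ (the latter forces $r\ge 2$, since $s\le k-1$). In the first case, writing $v=v_0\cdots v_m$, one checks that
\[
w=\sigma_k(v_0)[s+1,k]\,\sigma_k(v_1\cdots v_{m-1})\,\sigma_k(v_m)[1,s+r],
\]
and the Parikh identity gives $\psi(w)=\psi(u)+(m-1)\mathbb{I}_k$ for the compact word $u:=\sigma_k(v_0)[s+1,k]\,\sigma_k(v_m)[1,s+r]$, which has length $r+k$ and is a factor of $\sigma_k(v_0v_m)$. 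Since $v_0v_m=\partial v\in\partial\mathcal{F}_{m+1}(\mathbf{t}^{(k)})$, we have $u\in S_n(\mathbf{t}^{(k)})$. In the second case, writing $v=v_0\cdots v_{m+1}$, the parallel decomposition produces $\psi(w)=\psi(v')+m\mathbb{I}_k$ with $v':=\sigma_k(v_0)[s+1,k]\,\sigma_k(v_{m+1})[1,s+r-k]$ of length $r$; the range $k-r+1\le s\le k-1$ places $v'$ precisely inside the window $\sigma_k(v_0v_{m+1})[k-r+2,k+r-1]$, so $v'\in G_n(\mathbf{t}^{(k)})$.

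For the converse, I verify that every element of $S_n$ and $G_n$ actually arises: any $ab\in\partial\mathcal{F}_{m+1}(\mathbf{t}^{(k)})$ by definition extends to some $av_1\cdots v_{m-1}b\in\mathcal{F}_{m+1}(\mathbf{t}^{(k)})$, so any $u\in\mathcal{F}_{r+k}(\sigma_k(ab))$ can be re-inflated by reinserting the middle block $\sigma_k(v_1\cdots v_{m-1})$, producing a genuine factor $w\in\mathcal{F}_n(\mathbf{t}^{(k)})$ with $\psi(w)=\psi(u)+(m-1)\mathbb{I}_k$; the analogous argument works for $G_n$ with the shift $m\mathbb{I}_k$. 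Combining both directions gives, for $r\ge 2$,
\[
\{\psi(w):w\in\mathcal{F}_n(\mathbf{t}^{(k)})\}=(m-1)\mathbb{I}_k+\Bigl(\{\psi(u):u\in S_n\}\cup\{\mathbb{I}_k+\psi(v):v\in G_n\}\Bigr),
\]
while for $r\le 1$ the inequality $s+r\le k$ is automatic, so only the $S_n$ contribution appears; both claimed formulas then follow from translation invariance of cardinality.

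The main difficulty I anticipate is the bookkeeping that converts the threshold condition $s+r\le k$ versus $s+r>k$ into exactly the window $[k-r+2,k+r-1]$ used to define $G_n$, together with the recognition that the two coverings ($|v|=m+1$ versus $|v|=m+2$) differ by exactly one full block $\sigma_k(\cdot)$, which is precisely what forces the extra shift $\mathbb{I}_k$ between the $S_n$ and $G_n$ contributions inside the union.
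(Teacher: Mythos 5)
Your proposal is correct and follows essentially the same route as the paper: both decompose each length-$n$ factor into a suffix of one $\sigma_k$-block, a run of complete $\sigma_k$-blocks each contributing $\mathbb{I}_k$ to the Parikh vector, and a prefix of a final block, thereby reducing the count to the boundary words of the length-$(m+1)$ (resp.\ length-$(m+2)$) preimages, which is exactly the $S_n$/$G_n$ dichotomy. Your position-indexed bookkeeping ($i=kj+s$, threshold $s+r\le k$) is a slightly more explicit rendering of the paper's case analysis on the ``length vector of the preimages'' of $\alpha$ and $\beta$, and it correctly recovers the window $[k-r+2,k+r-1]$ in the definition of $G_n$.
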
 

\begin{proof}
For every $n\geq k \geq 2,$ set $n=km+r$ for some $m\geq 1$ and $r=0,\cdots k-1.$ Given any factor $u$ of length $n$ of $\mathbf{t}^{(k)},$ there exists a factorization (maybe not unique) such that 
\[ u=\alpha \sigma_k(v) \beta \]
for some factors $\alpha, v,\beta$ satisfying that $|v|=m-1, ~|\alpha|+|\beta| = k+r.$
{\red We can assume that, if $r\neq 0$, then both $\alpha$ and $\beta$ are not the empty word.}  In fact, if $r>0$ and $\alpha=\varepsilon$, then $|\beta|=k+r$ and let $\beta={\red \sigma_k}(a)\beta^{\prime}$ for some letter $a$. Set $v=v_0\cdots v_{m-2},$ now we cloud set the new prefix $\alpha^{\prime}=\sigma(v_0)$ and $v^{\prime}=v_1\cdots v_{m-2}a$, then
\[ u=\alpha^{\prime} \sigma_k(v^{\prime})\beta^{\prime}. \]
 It is {\red trivial} that \[ \psi(u) = \psi(\alpha) +\psi(\sigma_k(v))+\psi( \beta)=\psi(\alpha\beta)+(m-1)\mathbb{I}_{\red k}. \]
 
$\mathbf{(1)}$ If $r\leq 1,$  the length of {\red the} preimage of $\alpha$ and $\beta$ are both $1$  except {\red in the case that $\alpha$  or $\beta$ is the empty word}. In other words, $\alpha \triangleright \sigma_k(a),~\beta \triangleleft \sigma_k(b)$ for some letters $a,b\in \{0,\cdots, k-1\}.$ Consequently,  $\alpha\beta$ is a factor of length $k+r$ of  $\sigma(ab)$ where $ab \in \partial{\mathcal{F}_{m+1}(\mathbf{t}^{(k)})}.$ At the {\red same} time, it is easy to know that all the possible $\alpha\beta$ is exactly the factor set of length $k+r$ of ${\red \sigma_k}(ab)$ for the above $ab$ even if $r=0$. i.e.,
\begin{eqnarray*}
\rho^{ab}_{n}( \mathbf{t}^{(k)}) &=& \#\{\psi(u) : u \in {\mathcal{F}_{n}(\mathbf{t}^{(k)})}\} \\
						  &=& \#\{\psi(\alpha\beta): \alpha \sigma_k(v) \beta \in {\mathcal{F}_{n}(\mathbf{t}^{(k)})}\} \\
						  &=& \#\{\psi(\alpha\beta): \alpha  \beta  \prec \sigma_k(ab)\text{ with }ab \in \partial{\mathcal{F}_{m+1}(\mathbf{t}^{(k)})}, ~|\alpha\beta|=k+r \} \\
						  &=& \#\{\psi(v): v \in S_n(\mathbf{t}^{(k)}) \}
\end{eqnarray*} 
which is the desired result.

$\mathbf{(2)}$ If $r\geq 2,$ the length vector set of preimage of $\alpha$ and $\beta$ is  $\{(1,1),(2,1),(1,2)\}.$  When the length vector is  $(1,1)$, we have the same argument with the above case $r\leq 1.$ It is {\red sufficient} to consider the case {\red where} the length vector is $(1,2),$  since $(2,1)$ and $(1,2)$ are symmetric case.

Now assume that $|\beta|>k$ and $1 \leq |\alpha|<r$, and
\[ \alpha \triangleright \sigma_k(a),~\beta=\sigma_k(b)\beta^{\prime} \text{ with } \beta^{\prime} \triangleleft \sigma_k(c) \]
for some letters $a,b,c \in \{0,\cdots, k-1\}.$ {\red Note that the value of the letter $b$ does not affect the Parikh vector of $\alpha\beta,$ and that the word $ac$ is a boundary word of a factor of length $m+2$ of $\mathbf{t}^{(k)}.$} At the same time, {\red since} we have that $|\alpha|+|\beta^{\prime}|=r,$ the length of $\beta^{\prime}$ ranges from $1$ to $r-1.$ In other words,  
\[ \alpha\beta^{\prime} \prec \sigma_k(ac)[k-r+2,k+r-1]. \]
Moreover, all the possible $\alpha\beta^{\prime}$ is the set of {\red factors} of length $r$ of {\red the form} $\sigma_k(ac)[k-r+2,k+r-1].$ For the length vector $(2,1)$,  we can apply the same {\red reasoning}. Overall, we have that 
\begin{eqnarray*}
\rho^{ab}_{n}( \mathbf{t}^{(k)}) &=& \#\{\psi(u) : u \in {\mathcal{F}_{n}(\mathbf{t}^{(k)})}\} \\
								&=& \#\{\psi(v) \cup \psi(0u): v \in S_n(\mathbf{t}^{(k)}),~ u \in  G_n(\mathbf{t}^{(k)}) \} \\
								&=& \#\{\psi(v) \cup (\mathbb{I}_k+\psi(u)): v \in S_n(\mathbf{t}^{(k)}),~ u \in  G_n(\mathbf{t}^{(k)}) \} 
\end{eqnarray*} 
This {\red completes} the proof.
\end{proof}
According to Lemma \ref{lem:toboundary}, $\{\partial\mathcal{F}_{m}(\mathbf{w})\}_{m\geq 2}$ is the key point to prove Theorem \ref{thm:main1}.

\begin{lemma}\label{lem:tkboudary}
For the generalized Thue-Morse sequence $\mathbf{t}^{(k)}$ and every integer $n\geq 2,$ we have
\[ \partial\mathcal{F}_{n}(\mathbf{t}^{(k)}) ={\Sigma_k}^2.\] 
\end{lemma}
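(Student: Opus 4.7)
My plan is to prove $\partial\mathcal{F}_n(\mathbf{t}^{(k)}) = \Sigma_k^2$ by strong induction on $n \geq 2$, leveraging the block decomposition $\mathbf{t}^{(k)} = \sigma_k(t_0)\sigma_k(t_1)\sigma_k(t_2)\cdots$ in which each block $\sigma_k(t_j) = t_j(t_j+1)\cdots(t_j+k-1) \pmod k$.

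For the base case $n = 2$, since $\partial u = u$ when $|u| = 2$, I must show $\mathcal{F}_2(\mathbf{t}^{(k)}) = \Sigma_k^2$. I sort the length-$2$ factors into two families. The intra-block pairs have the form $(t_j + i, t_j + i + 1) \bmod k$ for $0 \leq i \leq k - 2$; as $t_j$ takes every value in $\Sigma_k$, these realize every pair of the form $(x, x + 1 \bmod k)$. The inter-block pairs have the form $(t_j - 1, t_{j+1}) \bmod k$, and since $(t_j, t_{j+1})$ is itself in $\mathcal{F}_2$, this yields the closure rule $(y, z) \in \mathcal{F}_2 \Rightarrow (y - 1 \bmod k, z) \in \mathcal{F}_2$. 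Iterating this rule $k - 1$ times, starting from $\{(x, x + 1 \bmod k) : x \in \Sigma_k\}$, successively produces all pairs with each possible difference modulo $k$, exhausting $\Sigma_k^2$.

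For the inductive step with $n \geq 3$ and target pair $(a, b) \in \Sigma_k^2$, I set $m = \max\{2,\, \lceil n/k \rceil\}$; a short calculation shows $2 \leq m < n$ for all $n \geq 3$ and $k \geq 2$. I will construct the desired length-$n$ factor as a length-$n$ substring of $\sigma_k(v)$ (which has length $km \geq n$) for an appropriately chosen length-$m$ factor $v$. When this substring straddles the first block $\sigma_k(v_0)$ and the last block $\sigma_k(v_{m-1})$, its first letter equals $v_0 + i_1 \bmod k$ and its last letter equals $v_{m-1} + i_2 \bmod k$, where $i_1, i_2 \in \{0, \ldots, k-1\}$ are offsets determined by the starting position of the substring. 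I fix a feasible starting position (the feasible range is non-empty for our choice of $m$) and then select $v$ with $v_0 \equiv a - i_1$ and $v_{m-1} \equiv b - i_2 \pmod k$; such $v$ exists by either the base case (if $m = 2$) or the induction hypothesis applied to $\partial\mathcal{F}_m$.

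The main obstacle I anticipate is the small-$n$ regime $3 \leq n \leq k$, where the naive reduction $\lceil n/k \rceil = 1$ is insufficient. Here the two-block substring argument above with $m = 2$ fills the gap: the starting position can be chosen in $[k - n + 1, k - 1]$ (non-empty since $n \geq 2$), and the explicit choice $p = k - n + 1$ forces $(v_0, v_1) = (a + n - 1 \bmod k,\, b)$, which is realized as a length-$2$ factor by the base case, closing the argument for all $n \geq 2$.
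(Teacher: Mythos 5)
Your proof is correct and follows essentially the same route as the paper: induction on $n$, realizing a length-$n$ factor as a substring of $\sigma_k(v)$ for a shorter factor $v$ whose boundary pair is supplied by the induction hypothesis, the first and last letters being shifted from $v_0$ and $v_{m-1}$ by known offsets. The only real difference is that you work out the base cases $2\leq n\leq k$ explicitly (via the intra-/inter-block analysis of $\mathcal{F}_2(\mathbf{t}^{(k)})$ and the $m=2$ straddling argument), whereas the paper dismisses them as obvious.
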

\begin{proof}
We shall prove by induction by $n.$ Obviously this lemma holds for $n=2,\cdots,k.$ Suppose it holds for all $i\leq n,$ we will show it holds for $n+1.$ 
\iffalse
 If $n+1 \equiv 0 \mod k,$ without loss of generality, set $n+1=mk~(m\geq 1),$ then there exists factors of length $m$ in the form of $aub$ where $ab$ is {\red a} non-empty word with $|ab|{\red =} 2$. By the assumption, $ab$ could go through all the words in ${\Sigma_k}^{|ab|}.$ However, we know that $\sigma_k(aub)$ begins with the letter $a$, and ends with the letter $(b-1)\mod k.$ This implies that the lemma holds.
\fi
%If $n+1 \equiv r \mod k~ (r\geq 1),$
{\red Set $n+1=mk+r$ with  $1\leq m$ and $1\leq r\leq k,$ there exists factors of length $m+1$} in the form of $aub$ where $ab$ is {\red a} non-empty word with $|ab|{\red =} 2.$ By the assumption, $ab$ could go through all the words in ${\Sigma_k}^{|ab|}.$
We know that for every letter (integer) $i \in \{0,1.\cdots,k-1\},$ 
\[ \sigma_k(i) = i (i+1) \cdots (i+k-1) \mod k.\]
Now we consider the finite word $v=\sigma_k(aub)[1,n+1],$ which begins with $a$ and ends with $( b+r-1)\mod k.$  It follows that this lemma holds for $n+1$ in this case. 

\end{proof}
For simplicity, let $|V|_i$ be the {\red number of occurrences of $i$ as an entry of the} finite dimensional vector $V.$

\begin{lemma}\label{lem:main1-req0}
For every $n\geq k \geq 2,$ if $n=km$ for some $m\geq 1,$ then  we have
\[ \rho_{n}^{ab}(\mathbf{t}^{(k)}) =  1+\sum_{t=1}^{\lfloor \frac{k}{2}\rfloor} k(k-2t+1)=\begin{cases} 
1+\frac{1}{4}k^3 & \text{ if $k$ is even,} \\
1+\frac{1}{4}k(k^2-1) & \text{ otherwise.} \\
\end{cases} \]
\end{lemma}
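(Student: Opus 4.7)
The plan is to combine Lemma~\ref{lem:toboundary} (with $r=0$) and Lemma~\ref{lem:tkboudary} to obtain
\[ \rho_n^{ab}(\mathbf{t}^{(k)}) = \#\bigl\{\psi(u): u\in \mathcal{F}_k(\sigma_k(a)\sigma_k(b)),\ a,b\in \Sigma_k\bigr\}, \]
and then analyse these factors directly. Since $\sigma_k(a) = a(a+1)\cdots(a+k-1) \pmod k$, the length-$k$ factor $v_i$ of $\sigma_k(a)\sigma_k(b)$ starting at position $i \in \{0,1,\dots,k\}$ is a suffix of $\sigma_k(a)$ of length $k-i$, containing exactly once each letter of $A := \{a+i,\dots,a+k-1\} \pmod k$, followed by a prefix of $\sigma_k(b)$ of length $i$, containing exactly once each letter of $B := \{b,\dots,b+i-1\} \pmod k$. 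Hence $\psi(v_i)$ carries a $2$ at each position of $M := A \cap B$, a $0$ at each position of $Z := \Sigma_k \setminus (A \cup B)$, and a $1$ elsewhere. Both $A$ and $B$ are cyclic intervals of $\mathbb{Z}/k\mathbb{Z}$ of lengths $k-i$ and $i$, and as $(a,b,i)$ varies the ordered pair $(A,B)$ attains every pair of cyclic intervals with $|A|+|B|=k$.

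I would partition the Parikh vectors by $j := |M| = |Z| \in \{0,1,\dots,\lfloor k/2 \rfloor\}$, the number of $2$'s. The case $j=0$ forces $A\cup B = \Sigma_k$ and yields only the vector $(1,\dots,1)$, contributing~$1$. For $1 \le j \le \lfloor k/2 \rfloor$ the Parikh vector is determined by the disjoint pair $(M,Z)$, so the task reduces to counting the realisable $(M,Z)$. The structural claim is: \emph{$(M,Z)$ is realisable if and only if $M$ and $Z$ are both cyclic intervals of $\Sigma_k$ of size $j$}. For necessity, $j\ge1$ forces $A\cup B \ne \Sigma_k$; cutting $\Sigma_k$ at a point of $Z$ presents $A,B$ as two intervals on a line which overlap (since $A\cap B \ne \emptyset$), so $A\cap B$ and $A\cup B$ are each a single line interval, whence $M$ and $Z$ are cyclic intervals of $\Sigma_k$. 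For sufficiency, given disjoint cyclic intervals $M,Z$ of size $j$, taking $A := M$ and $B := \Sigma_k \setminus Z$ produces cyclic intervals of lengths $j$ and $k-j$ satisfying $A\cap B = M$, $\Sigma_k \setminus (A\cup B) = Z$, and $|A|+|B|=k$; this realises the factor at position $i = k-j$ of $\sigma_k(a)\sigma_k(b)$ for the unique choice of $a,b$ placing $A$ and $B$ correctly.

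With this characterisation in hand the counting is elementary: $\Sigma_k$ has $k$ cyclic intervals of size $j$, and for each such $M$ the set $\Sigma_k \setminus M$ is a cyclic interval of length $k-j$ on which $Z$ must be a linear interval of length $j$, giving $k-2j+1$ choices. Hence there are $k(k-2j+1)$ Parikh vectors with $j$ twos, and
\[ \rho_n^{ab}(\mathbf{t}^{(k)}) = 1 + \sum_{t=1}^{\lfloor k/2 \rfloor} k(k-2t+1). \]
The two closed forms follow by separating the parity of $k$ and summing an elementary arithmetic progression. The main obstacle is the structural characterisation of realisable $(M,Z)$ — in particular the observation that a proper union of two overlapping cyclic intervals is again a cyclic interval; once this is established, everything else is routine.
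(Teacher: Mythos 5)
Your proof is correct and follows essentially the same route as the paper: reduce via Lemma~\ref{lem:toboundary} and Lemma~\ref{lem:tkboudary} to the factors of $\sigma_k(ab)$ for all $ab\in\Sigma_k^2$, observe that each Parikh vector is determined by the disjoint pair of cyclic blocks carrying the $2$'s and the $0$'s, and count $k(k-2t+1)$ such pairs for each block length $t$. Your explicit characterisation of the realisable pairs $(M,Z)$ via overlapping cyclic intervals is a more rigorous rendering of the step the paper justifies informally by its ``Parikh vector circle'' figure.
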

\begin{proof}
 By Lemma \ref{lem:toboundary}, when $r=0$, it {\red suffices} to consider $S_n(\mathbf{t}^{(k)})$. In this case, {\red it follows from Lemma \ref{lem:tkboudary} that}
\[ S_n(\mathbf{t}^{(k)}) = \bigcup_{ab \in {\Sigma_k}^2} \mathcal{F}_{k}\big(\sigma_k(ab)\big)=\bigcup_{ab \in {\Sigma_k}^2,1\leq t \leq {\red k+1}} \{\sigma_k(ab)[t,t+k-1]\}. \]
For every $u \in S_n(\mathbf{t}^{(k)}),$ $|u|_i\leq 2$ for every $i \in \Sigma_k.$ The only Parikh vector without $2$ is $\mathbb{I}_k.$ It {\red suffices} to consider the number and the position of $2$s in the elements of Parikh {\red vectors} since the numbers of $2$ and $0$ are the same. In fact, for every $u \in S_n(\mathbf{t}^{(k)}),$ write 
\[ u=\alpha\beta=\alpha_1\alpha_2\cdots \alpha_{t} \beta_1\beta_2\cdots \beta_{k-t}=(\alpha_1\cdots(\alpha_1+t-1)\beta_1\cdots (\beta_1+k-t-1) \mod k) \]
where $\alpha \triangleright \sigma(a), \beta \triangleright \sigma(b)$ for some letters $a,b.$
\[ |\psi(u)|_0+|\psi(u)|_1+|\psi(u)|_2=k\]
and 
\[ |\psi(u)|_1+2|\psi(u)|_2=k,\]
which implies $|\psi(u)|_2=|\psi(u)|_0.$ We put the $k$ elements of {\red a} Parikh vector into a circle, {\red as seen} in Figure \ref{fig:Parikh_circle}. {\red With the help of that figure}, it is not hard to know that the positions of the letters $2$ and $0$ are both a consecutive block in this circle. The length of $2$s' block ranges from $1$ to $\lfloor \frac{k}{2} \rfloor.$ For every fixed length $t$ of $2$s' block there are $k$ positions to place the  consecutive $2$s' block  of length $t$. Next we only need to place the consecutive block of $0$s of length $t$ in the left $(k-t)$ positions. Hence the number of Parikh {\red vectors} with at least one element $2$ is 
\[ \sum_{t=1}^{\lfloor \frac{k}{2}\rfloor} k(k-2t+1).\]
Adding the Parikh vector $\mathbb{I}_k,$ we complete the proof.
\end{proof}

\begin{figure}[htpb]
\begin{center}
\begin{tikzpicture}[scale=0.7]

\coordinate (O) at (0,0);

\draw[fill=white] (O) circle (2.5);
\draw[fill=white] (O) circle (1.1);
 \pgfmathsetmacro\angdiv{360/12}
 \draw
\foreach \i in {0,...,11}{% <-----------------------
        ($({90-(\i-1/2)*\angdiv}:2.5)$) -- ($(({90-(\i-1/2)*\angdiv}:1.1)$)
    };

\foreach \i in {0,...,9}{% <-----------------------
\node at ({105-(\i-1/2)*\angdiv}:1.8) {$\i$};
}
\node at ({105-(10-1/2)*\angdiv}:1.8) {$\cdots$};
\node at ({105-(11-1/2)*\angdiv}:1.8) {\small{$k-1$}};

\draw[thick,red] ([shift=({90-(2-1/2)*\angdiv}:3cm)]O) arc ({90-(2-1/2)*\angdiv}:{90-(8-1/2)*\angdiv}:3cm) node at ({90-(5-1/2)*\angdiv}:3.5) {$\alpha$};

\draw[thick,blue] ([shift=({90-(6-1/2)*\angdiv}:3.5cm)]O) arc ({90-(6-1/2)*\angdiv}:{90-(12-1/2)*\angdiv}:3.5cm) node at ({90-(9-1/2)*\angdiv}:4) {$\beta$};

\end{tikzpicture}
\end{center}
\vspace{-5pt}
\caption{Parikh vector circle over $\Sigma_k$}\label{fig:Parikh_circle}
\end{figure}
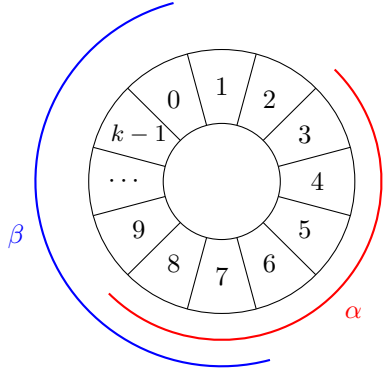

\begin{lemma}\label{lem:main1-rneq0}
For every $n\geq k \geq 2,$ if $n=km+r$ for some $m\geq 1$ and $r=1,\cdots,k-1,$ then we have
\begin{eqnarray*}
 \rho_{n}^{ab}(\mathbf{t}^{(k)}) &=&  k+\frac{1}{2}{\red k(r-1)(k-r-1)}+\sum_{{\red \tau}=1}^{\lfloor \frac{r}{2}\rfloor} k(r-2{\red\tau}+1)+\sum_{t=1+r}^{\lfloor \frac{k+r}{2}\rfloor} k(k+r-2t+1) \\
&=&\begin{cases} 
\frac{1}{4}k(k-1)^2+k  & \text{ if $k$ is odd},  \\
\frac{1}{4}k(k-1)^2+\frac{5}{4}k  & \text{ if $k$ is even and $r$ is even},  \\
\frac{1}{4}k^2(k-2)+k  & \text{ if $k$ is even and $r$ is odd}. 
\end{cases} 
\end{eqnarray*}

\end{lemma}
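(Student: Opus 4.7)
The plan is to first apply Lemma~\ref{lem:toboundary} to identify $\rho_n^{ab}(\mathbf{t}^{(k)})$ with the cardinality of $\{\psi(u):u\in S_n(\mathbf{t}^{(k)})\}\cup\{\mathbb{I}_k+\psi(v):v\in G_n(\mathbf{t}^{(k)})\}$ (with the second set omitted when $r=1$). By Lemma~\ref{lem:tkboudary}, every pair in $\Sigma_k^2$ occurs as a boundary word, so the unions defining $S_n$ and $G_n$ range over all $ab\in\Sigma_k^2$. Since $\sigma_k(a)=a(a+1)\cdots(a+k-1)\bmod k$ is a cyclic shift of $01\cdots(k-1)$, every $u\in S_n$ decomposes as $u=\alpha\beta$ with $\alpha$ a suffix of some $\sigma_k(a)$ of length $s\in[r,k]$ and $\beta$ a prefix of some $\sigma_k(b)$, so the supports of $\alpha$ and $\beta$ are cyclic arcs $A,B\subseteq\Sigma_k$ (i.e.\ sets of the form $\{p,p+1,\ldots,p+\ell-1\}\bmod k$) with $|A|+|B|=k+r$ and $|A|,|B|\in[r,k]$, yielding $\psi(u)=\mathbf{1}_A+\mathbf{1}_B$. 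Dually, for $v\in G_n$, $\mathbb{I}_k+\psi(v)=\mathbb{I}_k+\mathbf{1}_{A'}+\mathbf{1}_{B'}$ with cyclic arcs $A',B'$ satisfying $|A'|+|B'|=r$ and $|A'|,|B'|\ge 1$; note that its entries lie in $\{1,2,3\}$ whereas those of $\psi(u)$ lie in $\{0,1,2\}$.

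Next I will partition the achievable Parikh vectors into four mutually disjoint types, keyed on the presence of $0$s/$3$s and the shape of the $2$s, and count each. Type~T1, with entries in $\{1,2\}$ and the $2$s forming a single arc of length $r$, is realized as $\mathbb{I}_k+\mathbf{1}_C$ for any arc $C$ of length $r$, contributing $k$ vectors. Type~T2, with entries in $\{1,2\}$ and the $2$s split into two arcs $C_1,C_2$ separated by $1$s-gaps $L_1,L_2$, is realized by $A=C_1\cup L_1\cup C_2$ and $B=C_2\cup L_2\cup C_1$ (both wrapping through opposite gaps); enumerating ordered triples $(|C_1|,|L_1|,\text{start of }C_1)$ gives $k(r-1)(k-r-1)$, and dividing by $2$ for the $C_1\!\leftrightarrow\!C_2$ symmetry yields $\tfrac{1}{2}k(r-1)(k-r-1)$. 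Type~T3, containing a $3$, can only arise from $\mathbb{I}_k+\mathbf{1}_{A'}+\mathbf{1}_{B'}$; since $|A'|+|B'|=r<k$ the union cannot cover $\Sigma_k$, so $A'\cap B'$ is a single arc of some length $\tau\in[1,\lfloor r/2\rfloor]$ and $A'\cup B'$ a single arc of length $r-\tau$, contributing $k(r-2\tau+1)$ per $\tau$. Type~T4, containing a $0$, can only arise from $\mathbf{1}_A+\mathbf{1}_B$; the presence of $0$s forces $A\cup B\ne\Sigma_k$, so both the $2$s (count $t\ge r+1$) and the $0$s (count $t-r$) form single arcs, contributing $k(k+r-2t+1)$ per $t\in[r+1,\lfloor(k+r)/2\rfloor]$.

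Summing T1--T4 produces the first expression in the lemma. For the three closed forms I apply the telescoping identity $\sum_{j=1}^{J}(N-2j+1)=J(N-J)$ to T3 with $(N,J)=(r,\lfloor r/2\rfloor)$ and to T4 with $(N,J)=(k-r,\lfloor(k-r)/2\rfloor)$, then simplify by routine algebra split on the parities of $k$ and $r$; when $k$ is odd the aggregate turns out to be independent of the parity of $r$, collapsing to the single formula $\tfrac{1}{4}k(k-1)^2+k$.

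The hard part will be rigorously establishing the completeness and disjointness of the T1--T4 classification. The key geometric observation I will use is that two cyclic arcs $X,Y\subseteq\Sigma_k$ have single-arc intersection whenever $X\cup Y\ne\Sigma_k$, and can have two-arc intersection only when $X\cup Y=\Sigma_k$. This dichotomy cleanly separates T2 (where $A\cup B=\Sigma_k$, permitting the two-arc $2$s pattern) from T4 (where $(A\cup B)^c$ is non-empty, forcing $A\cap B$ to be a single arc and hence the $2$s to form one arc). Justifying the factor $\tfrac{1}{2}$ in T2 requires verifying that the swap rotation $p\mapsto p+\tau+\ell_1$ is non-trivial on $\Sigma_k$, which holds because $\tau+\ell_1\le(r-1)+(k-r-1)=k-2<k$.
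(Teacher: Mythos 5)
Your proposal is correct and follows essentially the same route as the paper: both classify the achievable Parikh vectors by the presence of $0$s or $3$s and by whether the $2$s form one or two consecutive blocks on the circle of residues, arriving at exactly the same four counting terms $k$, $\tfrac12 k(r-1)(k-r-1)$, $\sum_\tau k(r-2\tau+1)$ and $\sum_t k(k+r-2t+1)$. Your arc/indicator-vector formalism ($\psi(u)=\mathbf{1}_A+\mathbf{1}_B$) merely tidies the paper's argument — it unifies the $r=1$ and $r>1$ cases and realizes the two-block type from $S_n$ rather than from the paper's auxiliary set $G'_n$ — but these are cosmetic differences, not a different method.
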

\begin{proof}
 %By Lemma \ref{lem:toboundary} and   Lemma \ref{lem:tkboudary}, there two subcases.
 {\red Taking into account Lemma \ref{lem:toboundary} and   Lemma \ref{lem:tkboudary}, we divide the proof into two subcases.}
 
 $\mathbf{(1)}$ when $r=1$, we only need {\red to} investigate $S_n(\mathbf{t}^{(k)})$.  We have that
\[ S_n(\mathbf{t}^{(k)}) = \bigcup_{ab \in {\Sigma_k}^2} \mathcal{F}_{k+1}\big(\sigma_k(ab)\big)=\bigcup_{ab \in {\Sigma_k}^2,1\leq t \leq k} \{\sigma_k(ab)[t,t+k]\}. \]
  For every $u \in S_n(\mathbf{t}^{(k)}),$ $|u|_i\leq 2$ for every $i \in \Sigma_k.$ Moreover, there is at least one element being $2$ in $\psi(u).$ After putting the $k$ elements of {\red the} Parikh vector into a circle, the position of $2$ and $0$ are both a consecutive block. If there is only one $2$ as the element in $\psi(u),$ i.e., $|\psi(u)|_2=1,$  then the possible number of {\red such Parikh vectors} is $k,$ since $a\sigma_k(0) \in S_n(\mathbf{t}^{(k)})$ for every $a \in \Sigma_k.$ Now set $|\psi(u)|_2=t,$  we need to consider the $t$ ranging from $2$ to $\lfloor (k+1)/2 \rfloor.$ At the {\red same} time, we have that $|\psi(u)|_0=t-1.$ It is {\red sufficient} to place the consecutive $0$s' block in the left $(k-t)$ positions. i.e., the number of Parikh vector having $\geq 2$ consecutive $2$s' block is 
\[ \sum_{t=2}^{\lfloor \frac{k+1}{2}\rfloor} k(k-2t+2). \]
Adding the $k$ Parikh {\red vectors} with exactly one $``2"$ as the element, the lemma holds.

$\mathbf{(2)}$ when $r>1$, $S_n(\mathbf{t}^{(k)})$ and $G_n(\mathbf{t}^{(k)})$ are both important to the result. 
\[ S_n(\mathbf{t}^{(k)}) = \bigcup_{ab \in {\Sigma_k}^2} \mathcal{F}_{k+r}\big(\sigma_k(ab)\big)=\bigcup_{ab \in {\Sigma_k}^2,1\leq t \leq k-r+1} \{\sigma_k(ab)[t,t+k+r-1]\} \]
and
\[ G_n(\mathbf{t}^{(k)}) = \bigcup_{ab \in {\Sigma_k}^2 } \mathcal{F}_{r}\big(\sigma_k(ab)[k-r+2,k+r-1]\big). \]
Following from Lemma \ref{lem:toboundary},  for every word $u$ in $G_n(\mathbf{t}^{(k)})$, when computing the abelian complexity, $\mathbb{I}_k$ should be added to  $\psi(u)$. For simplicity, we consider the following {\red set} $G^{\prime}_n(\mathbf{t}^{(k)})$ instead of  $G_n(\mathbf{t}^{(k)}).$

\[ G^{\prime}_n(\mathbf{t}^{(k)}) = \bigcup_{ab \in {\Sigma_k}^2 } \mathcal{F}_{k+r}\big(\sigma_k(acb)[k-r+2,2k+r-1]\big). \]
Note that the letter $c$ can be arbitrary in $\Sigma_k.$  Given any $u \in G^{\prime}_n(\mathbf{t}^{(k)}),$ write 
\begin{eqnarray*}
u=\alpha\sigma_k(c)\beta&=&\alpha_1\alpha_2\cdots \alpha_{t}\sigma_k(c) \beta_1\beta_2\cdots \beta_{r-t} \\
&=&(\alpha_1\cdots(\alpha_1+t-1)\sigma_k(c)\beta_1\cdots (\beta_1+r-t-1) \mod k)
\end{eqnarray*}
where $\alpha \triangleright \sigma(a), \beta \triangleright \sigma(b)$ for some letters $a,b.$ 

$\mathbf{(i)}$ First we focus on the case that the maximal element of 
{\red the} Parikh vector is $2.$ For every word $u \in G^{\prime}_n(\mathbf{t}^{(k)}),$ then the length of consecutive $2$s' block is strictly less than $r+1.$ Obviously the number of Parikh {\red vectors} with exactly $r$ length of consecutive $2$s' block as the element is $k.$ For the word $u=\alpha\sigma_k(c)\beta,$ we have \[ \{\alpha_i: 1\leq i \leq t\} \cap \{\beta_i: 1\leq i \leq r-t\} =\emptyset.\]
First we put $\alpha$ into the circle, {\red the number of the choices for that being equal to $k.$}  We can assume $\beta_1$ and $\beta_{\red r-t}$ both are not belongs to ${\red \{\alpha_{t}+1, \alpha_1-1\} \mod k.}$ i.e., the gap between $\alpha$ {\red and} $\beta$ is positive. Otherwise, $|\psi(u)|_2=r,$ which has been considered. There are $(k-t-2)$ positions for $\beta.$ Hence all the possible number of Parikh vector is
\[ \frac{1}{2}\sum_{t=1}^{r-1} k\big((k-t-2)-(r-t)+1\big)= \frac{1}{2}{\red k(r-1)(k-r-1)}.\]

We could apply the same argument of case $\mathbf{(1)}$ to obtain the number of Parikh vector having $\geq r+1$ consecutive $2$s' {\red blocks}:
\[\sum_{t=1+r}^{\lfloor \frac{k+r}{2}\rfloor} k(k+r-2t+1).\]
Overall, the total number of Parikh {\red vectors} in this case is
\[ k+\frac{1}{2}{\red k(r-1)(k-r-1)}+\sum_{t=1+r}^{\lfloor \frac{k+r}{2}\rfloor} k(k+r-2t+1).\]

$\mathbf{(ii)}$ Then {\red the case that is left to consider is} that the maximal element of the corresponding Parikh vector is $3.$ %This case only happens for the word in $G^{\prime}_n(\mathbf{t}^{(k)}).$ 
For the word $u=\alpha\sigma_k(c)\beta,$ we have \[ \#(\{\alpha_i: 1\leq i \leq t\} \cap \{\beta_i: 1\leq i \leq r-t\}):={\red \tau} >0.\]
Morever, $|\psi(u)|_3= {\red\tau}.$ Clearly, ${\red\tau}$ ranges from $1$ to $\lfloor \frac{r}{2}\rfloor.$ {\red Also the positions of $3$ and $1$  both form consecutive blocks}. In the same manner {\red as before}, we can obtain the  number of Parikh {\red vectors} in this case, {\red which is}
\[ \sum_{{\red\tau}=1}^{\lfloor \frac{r}{2}\rfloor} k(r-2{\red\tau}+1). \]
We know that the subcase $\mathbf{(i)}$ and subcase $\mathbf{(ii)}$ do not intersect, 
{\red which implies} that this lemma holds.
\end{proof}
\begin{proof}[Proof of Theorem \ref{thm:main1}]
It follows from Lemma \ref{lem:main1-req0} and Lemma \ref{lem:main1-rneq0} that Theorem \ref{thm:main1} holds.
\end{proof}

Richomme et al \cite{RSZ11} obtain the sufficient and necessary condition for aperiodic sequence $\mathbf{w}\in \Sigma_2^{\mathbb{N}}$ satisfying $\rho_n^{ab}(\mathbf{w})=\rho_n^{ab}(\mathbf{t}^{(2)}).$ It is natural to ask the following question.
\begin{question}
For the aperiodic sequence $\mathbf{w}\ \in \Sigma_k^{\mathbb{N}}$ with $k\geq 3,$ what is the sufficient and necessary condition for $\mathbf{w}$ satisfying $\rho_n^{ab}(w)=\rho_n^{ab}(\mathbf{t}^{(k)})$?
\end{question}

\section{Abelian complexities of a class of infinite sequences}
Let $\mathbf{w}=\{w_n\}_{n\geq 0}$ be an infinite sequence. Define the \emph{$k$-{\red kernel}} of $\mathbf{w}$ to be the set of subsequence 
\[ \mathcal{K}_k(\mathbf{w}):=\{ (w_{{k^e}n + c})_{n \ge 0}~|~ {e \ge 0,0 \le c < k^e}\} \]
\begin{definition}\label{def:automaic}
Let $k\geq 2,$ $\mathbf{w}$ is $k$-automatic if and only if $\mathcal{K}_k(\mathbf{w})$ is finite.
\end{definition}

\begin{lemma}{\cite[Theorem 6.8.2]{AS03}}\label{lem:auto-periodic}
{\red Consider a} finite alphabet $\mathcal{A}$ and ${\red a> 0}.$ Let $\{w_n\}_{n\geq 0}$ be a sequence taking values in $\mathcal{A}$ such that $\{w_{an+i}\}_{n\geq 0}$ is $k$-automatic for $0\leq i < a$. Then $\{w_n\}_{n\geq 0}$ is itself $k$-automatic.
\end{lemma}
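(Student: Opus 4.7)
The plan is to construct a DFA with output computing $w_n$ from the base-$k$ representation of $n$, thereby establishing $k$-automaticity via the standard equivalence with Definition \ref{def:automaic}. By hypothesis, each subsequence $\{w_{am+i}\}_{m \geq 0}$ is $k$-automatic, so for $0 \leq i < a$ there is a DFA $M_i = (Q_i, \delta_i, \tau_i)$ that, on input the base-$k$ digits of $m$ read most-significant-digit first, ends in a state whose output label under $\tau_i$ equals $w_{am+i}$.

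My proposed automaton simulates long division by $a$ in base $k$ while running all $a$ machines $M_i$ in parallel. A state has the form $(s, S_0, \ldots, S_{a-1})$ with $s \in \{0, \ldots, a-1\}$ (the running remainder) and $S_i \in Q_i$. On reading the next base-$k$ digit $d$ of $n$, the transition computes
\[ t \;=\; sk + d, \qquad q \;=\; \lfloor t/a \rfloor, \qquad s' \;=\; t \bmod a, \]
and updates the state to $(s', \delta_0(S_0, q), \ldots, \delta_{a-1}(S_{a-1}, q))$. Since $t \leq (a-1)k + (k-1) = ak-1$, the quotient $q$ lies in $\{0, 1, \ldots, k-1\}$; hence long division emits exactly one valid base-$k$ digit per input digit and feeds it synchronously to every $M_i$. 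After processing all $L$ digits of $n$, the digits $q_1 q_2 \cdots q_L$ emitted along the way form the base-$k$ representation of $\lfloor n/a \rfloor$ (possibly with harmless leading zeros), while $s$ equals $n \bmod a$. The output of the combined machine is defined to be $\tau_s(S_s)$, which equals $w_{a\lfloor n/a \rfloor + (n \bmod a)} = w_n$, as required.

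The product state space has size $a \cdot \prod_{i=0}^{a-1} |Q_i|$, which is manifestly finite, so this construction exhibits $\{w_n\}$ as $k$-automatic. The main obstacle in writing this up carefully is not conceptual but bookkeeping: one must verify that the long-division transducer is genuinely finite-state and digit-synchronous in the chosen MSD convention (the inequality above is what makes this work), and that tolerating leading zeros in the quotient stream does not corrupt the output of each $M_i$, both of which are standard facts for DFAOs on base-$k$ representations. An alternative route, which I would fall back on if a cleaner exposition were wanted, is to work directly with the $k$-kernel: partition the indices of an arbitrary kernel element $(w_{k^e n + c})_{n \geq 0}$ according to the residue $(k^e n + c) \bmod a$, observe that each residue class restricts $(w_{k^e n + c})$ to an arithmetic-progression subsequence of some $\{w_{am+j}\}_{m\geq 0}$, and invoke closure of $k$-automatic sequences under such subsequences to bound the kernel; the reassembly step of this alternative is the same interleaving/product construction described above.
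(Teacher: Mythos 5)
Your construction is correct: the invariant for most-significant-digit-first long division (remainder $s$, emitted quotient digit $q=\lfloor (sk+d)/a\rfloor\in\{0,\dots,k-1\}$) is exactly what is needed, and selecting the output $\tau_s(S_s)$ from the finite product state space yields $w_n$. The paper does not prove this lemma at all --- it cites it as Theorem 6.8.2 of Allouche and Shallit --- and your argument is essentially the standard transducer/product proof of that theorem, with the only points requiring care (leading zeros in the quotient stream, digit-synchrony) correctly identified and handled.
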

\begin{lemma}\cite[Theorem 6.9.2]{AS03}\label{lem:closed}
Automatic sequences are closed under $1$-uniform transducers.
\end{lemma}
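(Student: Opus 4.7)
The plan is to show that if $\mathbf{w} = (w_n)_{n\geq 0}$ is $k$-automatic over an alphabet $\mathcal{A}$ and $T = (S, \mathcal{A}, \mathcal{B}, \gamma, s_0, \lambda)$ is a $1$-uniform transducer---a DFA-with-output with transition $\gamma : S \times \mathcal{A} \to S$ and Moore-style output $\lambda : S \to \mathcal{B}$, producing one output letter per input letter---then $\mathbf{u} := T(\mathbf{w})$ is $k$-automatic. My first step is a reduction to the accumulated state sequence. Define $g : \mathbb{N} \to S$ by $g(n) := \gamma^{*}(s_0, w_0 w_1 \cdots w_{n-1})$, where $\gamma^{*}$ is the usual extension of $\gamma$ to words. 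Then $u_n = \lambda(g(n))$ is the pointwise image of $g$ under a fixed map into a finite alphabet, and such pointwise images preserve $k$-automaticity (immediate from the definition of the $k$-kernel). So it suffices to prove that $g$ itself is $k$-automatic.

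To show $g$ is $k$-automatic, my plan is to verify that $\mathcal{K}_k(g)$ is finite. Fix $e \geq 0$ and $0 \leq c < k^e$; I would analyze the decimation $g(k^e n + c)$ by splitting the prefix $w_0 w_1 \cdots w_{k^e n + c - 1}$ into the initial piece $w_0 \cdots w_{c-1}$ (whose contribution is a constant starting state $s_c := g(c)$) followed by $n$ consecutive length-$k^e$ blocks $B_0, B_1, \ldots, B_{n-1}$, with $B_i = w_{k^e i + c} \cdots w_{k^e i + c + k^e - 1}$. Each coordinate $w_{k^e i + c + j}$ of $B_i$, viewed as a function of $i$, is an element of the finite set $\mathcal{K}_k(\mathbf{w})$ (possibly with a single-step shift when $c + j \geq k^e$). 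Reading a length-$k^e$ word induces one of at most $|S|^{|S|}$ possible maps $S \to S$, so the decimation $g(k^e n + c) = \gamma^{*}(s_c, B_0 B_1 \cdots B_{n-1})$ is driven by a sequence of block-transition maps drawn from a finite set $F \subseteq S^S$ whose components come from finitely many kernel elements of $\mathbf{w}$.

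The finishing argument packages this into a product DFAO whose state set is contained in $Q \times S$, where $Q$ is the state set of the base-$k$ DFAO $M$ for $\mathbf{w}$: the first coordinate tracks the current state of $M$ used to compute individual values $w_n$, and the second coordinate accumulates the transducer state $g(n)$, with updates governed by the finite family $F$ of block-transitions. Since both factors are finite, the product has at most $|Q|\cdot|S|$ reachable states, which yields the finiteness of $\mathcal{K}_k(g)$ and hence the $k$-automaticity of $\mathbf{u}$. The main obstacle is a technical but crucial subtlety in coordinating these two dynamics: a base-$k$ DFAO for $\mathbf{w}$ computes $w_n$ from the digits of $n$ in a way that is \emph{not} a sequential accumulator over $n$, whereas $g(n)$ is precisely such a sequential accumulator. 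I expect to reconcile these by induction on the decimation level $e$: at each level, the block-transition maps can be described using only the previous level's kernel data together with the fixed morphism $\gamma$, and the set of derived state-sequences obtained along the way stays inside the finite family $F$, so the product construction closes up without state blow-up.
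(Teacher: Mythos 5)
The paper does not prove this lemma; it is quoted verbatim from Allouche--Shallit \cite[Theorem 6.9.2]{AS03}, so your attempt can only be judged against the standard proof there. Your opening reduction is fine: passing to the accumulated state sequence $g(n)=\gamma^{*}(s_0,w_0\cdots w_{n-1})$ and noting that pointwise images under a fixed map preserve $k$-automaticity is exactly the right first move. The problem is everything after that. Your block decomposition shows that, for fixed $e$ and $c$, the sequence of block-transition maps $i\mapsto f_i\in S^S$ induced by $B_i$ is $k$-automatic (it is a pointwise function of $k^e$ kernel elements of $\mathbf{w}$), and that $g(k^e n+c)=\gamma^{*}(s_c,B_0\cdots B_{n-1})$ is the \emph{running product} of that sequence in the finite monoid $S^S$. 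But ``the running product in a finite monoid of a $k$-automatic sequence is $k$-automatic'' is precisely the theorem you are trying to prove (take the transducer whose states are the monoid elements, with $\gamma(s,a)=s\cdot\mu(a)$ and $\lambda=\mathrm{id}$). So the decomposition reduces the accumulation problem over letters to the identical accumulation problem over blocks, and the ``induction on the decimation level $e$'' is never actually carried out; it cannot close up in the form you describe, because the level-$e$ block map $f_i$ depends on $k^e$ kernel sequences evaluated at $i$, a quantity that grows with $e$, so there is no fixed finite family $F$ of block-transition \emph{sequences} uniform in $e$.

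The proposed product DFAO with state set $Q\times S$ has the same defect made concrete: a DFAO reads the base-$k$ digits of $n$, and when a digit $d$ is appended the index goes from $n'$ to $kn'+d$, so the $S$-coordinate would have to jump from $\gamma^{*}(s_0,w_0\cdots w_{n'-1})$ to $\gamma^{*}(s_0,w_0\cdots w_{kn'+d-1})$. That update requires the transition map induced by the block $w_{n'}\cdots w_{kn'+d-1}$, whose length $(k-1)n'+d$ grows with $n'$; it is not a function of the current pair in $Q\times S$ and the digit $d$. The missing ingredient is the $k$-uniform morphism representation $\mathbf{w}=\tau(h^{\omega}(a))$: the identity $w_{k^e n}\cdots w_{k^e n+k^e-1}=\tau(h^e(x_n))$ makes the level-$e$ block transition a function of the \emph{single} letter $x_n$, so each kernel sequence $n\mapsto g(k^e n)$ is governed by one of finitely many monoid morphisms $\mathcal{B}\to S^S$, and the finiteness of $\mathcal{K}_k(g)$ follows. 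Equivalently, one annotates the fixed point of $h$ with the accumulated transducer state to obtain a new $k$-uniform morphism on $\mathcal{B}\times S$ and invokes Cobham's theorem; this is the argument in the cited reference, and without it your proof does not go through.
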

Following from Lemma \ref{lem:closed}, we have the following corollary which will be useful.
\begin{corollary}\label{cor:concact-auto}
For every $k$-automatic sequence $\mathbf{w}=\{w_n\}_{n\geq 0}\in \mathcal{A}^{\mathbb{N}}$, $\{(w_nw_{n+1})\}_{n\geq 0} \in (\mathcal{A}\times \mathcal{A})^{\mathbb{N}}$ is also $k$-automatic.
\end{corollary}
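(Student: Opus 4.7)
The plan is to apply Lemma \ref{lem:closed} by constructing an explicit $1$-uniform transducer whose action on $\mathbf{w}$ produces a sequence closely related to $\{(w_n, w_{n+1})\}_{n \geq 0}$. Take $T$ with state set $Q = \mathcal{A} \cup \{q_0\}$ (with a fresh initial state $q_0$), transition function $\delta(q, a) = a$ for all $q \in Q$ and $a \in \mathcal{A}$, and output function $\lambda(q, a) = (q, a)$ when $q \in \mathcal{A}$ (with any fixed dummy letter $\star$ produced when $q = q_0$). Reading $\mathbf{w} = w_0 w_1 w_2 \cdots$, the transducer produces
\[
T(\mathbf{w}) \;=\; (\star, w_0)(w_0, w_1)(w_1, w_2)(w_2, w_3)\cdots,
\]
whose letter at position $n \geq 1$ is precisely $(w_{n-1}, w_n)$. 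By Lemma \ref{lem:closed}, $T(\mathbf{w})$ is $k$-automatic.

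The target sequence $\{(w_n, w_{n+1})\}_{n \geq 0}$ is nothing but the one-step left shift of $T(\mathbf{w})$, so the corollary follows once the shift is known to preserve $k$-automaticity. I would invoke this as a standard closure property; it can be re-derived directly from Definition \ref{def:automaic} as follows. For a $k$-automatic sequence $\mathbf{v}$, each subsequence $\{(\sigma\mathbf{v})_{k^e n + c}\}_{n \geq 0} = \{v_{k^e n + c + 1}\}_{n \geq 0}$ either already belongs to $\mathcal{K}_k(\mathbf{v})$ (when $c + 1 < k^e$, i.e.\ no base-$k$ carry) or coincides with the one-step shift of the kernel element $\{v_{k^e n}\}_{n \geq 0}$ (when $c + 1 = k^e$). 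Iterating the carry case on the finite set $\mathcal{K}_k(\mathbf{v})$ then forces $\mathcal{K}_k(\sigma\mathbf{v})$ to be finite by induction.

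The main obstacle is this shift-closure step: the carry case iteratively generates shifts $\sigma^t\{v_{k^e n}\}_{n \geq 0}$ of elements of the original kernel, and one must verify that only finitely many distinct such shifted sequences can ever appear. This is the standard but somewhat delicate ingredient in the proof of closure under shifts; everything else is immediate. Once this is settled, combining it with the transducer construction above, via Lemma \ref{lem:closed}, yields Corollary \ref{cor:concact-auto} directly.
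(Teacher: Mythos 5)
Your construction is correct and is essentially the route the paper intends: the paper derives Corollary~\ref{cor:concact-auto} from Lemma~\ref{lem:closed} alone, i.e.\ from a $1$-uniform transducer that remembers the previously read letter in its state, exactly as you set it up. The only difference is that you are explicit about the resulting off-by-one (the transducer can only emit $(w_{n-1},w_n)$ at position $n$, since it has no lookahead), and you therefore need closure of $k$-automaticity under the shift. The ``obstacle'' you flag there is not actually an obstacle, and no iteration is required: writing $\sigma\mathbf{v}$ for the shifted sequence $(v_{n+1})_{n\geq 0}$, the kernel element of $\sigma\mathbf{v}$ indexed by $(e,c)$ is $(v_{k^en+c+1})_{n\geq 0}$, which for $c+1<k^e$ is itself an element of $\mathcal{K}_k(\mathbf{v})$, and for $c+1=k^e$ equals $(v_{k^e(n+1)})_{n\geq 0}$, the one-step shift of the single kernel element $(v_{k^en})_{n\geq 0}$. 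Hence
\[
\mathcal{K}_k(\sigma\mathbf{v})\ \subseteq\ \mathcal{K}_k(\mathbf{v})\ \cup\ \{\sigma\mathbf{u} : \mathbf{u}\in\mathcal{K}_k(\mathbf{v})\},
\]
a union of two finite sets, so $\mathcal{K}_k(\sigma\mathbf{v})$ is finite by Definition~\ref{def:automaic} and you are done. (Alternatively you could simply cite the standard closure of $k$-automatic sequences under $n\mapsto an+b$ from Allouche--Shallit, or avoid the transducer altogether by noting that $\mathcal{K}_k\big(((w_n,w_{n+1}))_{n\geq 0}\big)$ injects into $\mathcal{K}_k(\mathbf{w})\times\mathcal{K}_k(\sigma\mathbf{w})$.) With that one-line addition your proof is complete.
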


Recall that $\mathbf{u}=\pi(\mathbf{w})$ for some  $k$-automatic sequence $\mathbf{w} \in {\mathcal{A}_1}^\mathbb{N}$ with $\pi:{\mathcal{A}_1}^{*} \mapsto {\mathcal{A}_2}^{*}$ satisfying that 
\[ \text{for every pair of letters }a,b \in {\mathcal{A}_1}, \pi(a) \sim_{ab} \pi(b)\]

For every $n\geq k \geq 2,$ set $n=km+r$ for some $m\geq 1$ and $r=0,\cdots k-1.$ Let
\[ S_n^{\prime}(\mathbf{w}) := \bigcup_{ab \in \partial\mathcal{F}_{m+1}(\mathbf{w}) } \mathcal{F}_{r+k}\big(\pi(ab)\big)  \] 

and
\[ G_n^{\prime}(\mathbf{w}) := \bigcup_{ab \in \partial\mathcal{F}_{m+2}(\mathbf{w}) } \mathcal{F}_{r}\big(\pi(ab)[k-r+2,k+r-1]\big)  \]

We shall give the following lemma without the proof, since the method is the same {\red as in} Lemma \ref{lem:toboundary},  with $\mathbf{t}^{(k)}$ being replaced by $\mathbf{w}$.
\begin{lemma}\label{lem:wtoboudary}
For every $n\geq k \geq 2,$ set $n=km+r$ for some $m\geq 1$ and $r=0,\cdots, k-1.$ If $r\leq 1,$ then we have
\[ \rho_{n}^{ab}(\mathbf{u}) =  \#\{ \psi(v): v\in S_n^{\prime}(\mathbf{w}) \}. \]
Otherwise if $r\geq 2,$ then we have
\[ \rho_{n}^{ab}(\mathbf{u}) =  \#\{ \psi(v) \cup (\psi(\pi(a)v^{
\prime})): v\in S_n^{\prime}(\mathbf{w}), v^{\prime} \in G_n^{\prime}(\mathbf{w}) \} \]
where $a$ is an arbitrary letter in $\mathcal{A}_1.$	
\end{lemma}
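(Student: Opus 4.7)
The plan is to mimic the proof of Lemma~\ref{lem:toboundary} line by line, with $\sigma_k$ replaced by $\pi$, using the hypothesis $\pi(a)\sim_{ab}\pi(b)$ to recover the two key properties of $\sigma_k$ that were used: uniformity and the constancy of the Parikh vector of $\pi(a)$. First I would observe that the hypothesis $\pi(a)\sim_{ab}\pi(b)$ for every pair of letters $a,b\in\mathcal{A}_1$ forces $|\pi(a)|=|\pi(b)|=k$ and, more importantly, $\psi(\pi(a))=\psi(\pi(b))=:\mathbf{p}$ independently of the letter. This is the only structural feature of $\sigma_k$ that was actually exploited in Lemma~\ref{lem:toboundary} (beyond desynchronization of prefixes and suffixes), so the whole argument transports.

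Next, for a factor $u\prec\mathbf{u}$ of length $n=km+r$, I would write $u=\alpha\,\pi(v)\,\beta$ with $v\prec\mathbf{w}$, $|v|=m-1$, and $|\alpha|+|\beta|=k+r$, where $\alpha$ is a suffix of $\pi(a)$ and $\beta$ is a prefix of $\pi(b')$ for suitable letters $a,b'$ flanking $v$ in $\mathbf{w}$. The same absorption trick used in the proof of Lemma~\ref{lem:toboundary} allows me to assume, when $r\neq 0$, that both $\alpha$ and $\beta$ are nonempty. Since $\psi(\pi(v))=(m-1)\mathbf{p}$ by the constancy observation, I obtain
\[
\psi(u)=\psi(\alpha\beta)+(m-1)\mathbf{p},
\]
so counting Parikh vectors of $u$ reduces to counting Parikh vectors of $\alpha\beta$, up to the global shift $(m-1)\mathbf{p}$, which does not affect cardinalities.

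For the case $r\leq 1$, the constraint $|\alpha|+|\beta|=k+r\leq k+1$ together with $0<|\alpha|,|\beta|\leq k$ (when $r=1$; the case $r=0$ is handled as in the original proof) forces $\alpha\triangleright\pi(a)$ and $\beta\triangleleft\pi(b)$ for some $ab\in\partial\mathcal{F}_{m+1}(\mathbf{w})$, and $\alpha\beta$ then ranges exactly over $\mathcal{F}_{r+k}(\pi(ab))$, i.e.\ over $S_n^{\prime}(\mathbf{w})$. This gives the first formula. For $r\geq 2$, the pre-image length vector $(|\pi^{-1}\alpha|,|\pi^{-1}\beta|)$ lies in $\{(1,1),(2,1),(1,2)\}$; the $(1,1)$ case contributes exactly $\{\psi(v):v\in S_n^{\prime}(\mathbf{w})\}$, while the $(1,2)$ case writes $\beta=\pi(b)\beta^{\prime}$ with $\beta^{\prime}\triangleleft\pi(c)$ and $ac\in\partial\mathcal{F}_{m+2}(\mathbf{w})$, so that $\alpha\beta^{\prime}$ ranges over $\mathcal{F}_{r}(\pi(ac)[k-r+2,k+r-1])$ and $\psi(\alpha\beta)=\psi(\alpha\beta^{\prime})+\mathbf{p}=\psi(\pi(a)\alpha\beta^{\prime})$ for an \emph{arbitrary} letter $a\in\mathcal{A}_1$ (this is precisely the reason the lemma statement is allowed to pick $a$ freely). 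The $(2,1)$ case is symmetric and produces no new Parikh vectors.

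I do not expect any genuine obstacle: the only place where the concrete combinatorial shape of $\sigma_k$ intervened in Lemma~\ref{lem:toboundary} was in the equality $\psi(\sigma_k(v))=|v|\cdot\mathbb{I}_k$, and the hypothesis on $\pi$ is designed exactly to replace $\mathbb{I}_k$ by $\mathbf{p}$. The mildly delicate bookkeeping lies in verifying that the middle letter $b$ (respectively $c$) in the $(1,2)$ and $(2,1)$ cases can be absorbed into $\pi(a)$ for any fixed $a$ via the identity $\psi(\pi(b))=\psi(\pi(a))=\mathbf{p}$, which is what legitimizes the formulation $\psi(\pi(a)v^{\prime})$ in the statement.
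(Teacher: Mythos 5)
Your proposal is correct and follows exactly the route the paper intends: the paper omits the proof of this lemma precisely because it is the proof of Lemma~\ref{lem:toboundary} with $\sigma_k$ replaced by $\pi$, and you have carried out that adaptation faithfully, correctly isolating the two facts that make it work ($\pi$ is $k$-uniform and $\psi(\pi(a))=\mathbf{p}$ is letter-independent, which replaces $\mathbb{I}_k$ and justifies the arbitrary choice of $a$ in $\psi(\pi(a)v')$). No gaps.
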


\begin{proof}[Proof of Theorem \ref{thm:main2}]
By Lemma \ref{lem:auto-periodic}, it suffices to show that $\{w_{km+r}\}_{m\geq 0}$ is $k$-automatic for $0\leq r < k$. If $ r\leq 1,$ then it {\red follows} from Lemma \ref{lem:wtoboudary} that for every $n=km+r$ with ${\red k\geq 2}$ and $r=0,1,$ we have
\[ \rho_{n}^{ab}(\mathbf{u}) =\rho_{km+r}^{ab}(\mathbf{u})=  \#\left\{ \psi(v): v\in \bigcup_{ab\in \partial F_{m+1}(\mathbf{w})} \mathcal{F}_{r+k}(\pi(ab)) \right\}. \]

This implies that the abelian complexity of length $km+r$ for $u$ is uniquely dependent on the set $\partial F_{m+1}(\mathbf{w}).$ Following from Lemma \ref{lem:closed},  $\{\rho_{km+r}^{ab}(\mathbf{u})\}_{m\geq 1}$ is $k$-automatic for $r=0,1$.

When $ r\geq 2,$ it follows from Lemma \ref{lem:wtoboudary} that for every $n=km+r$ with ${\red k\geq 2}$ and $r\in [2,k-1),$ we have
\begin{eqnarray*}
\rho_{km+r}^{ab}(\mathbf{u}) = \#\text{\huge$\{$}&&\{ \psi(v): v\in \bigcup_{ab\in \partial F_{m+1}(\mathbf{w})} \mathcal{F}_{r+k}(\pi(ab)) \}  \\
	&&\cup ~\{ (\psi(\pi(a)v^{
\prime})):v^{\prime} \in \bigcup_{ab \in \partial\mathcal{F}_{m+2}(\mathbf{w}) } \mathcal{F}_{r}\big(\pi(ab)[k-r+2,k+r-1]\big)\}\text{\huge$\}$}
\end{eqnarray*}
 This implies that the abelian complexity of length $km+r$ for $u$ is uniquely dependent on the set $\partial F_{m+1}(\mathbf{w})$ and  $\partial F_{m+2}(\mathbf{w}).$ By Lemma \ref{lem:closed} and Corollary \ref{cor:concact-auto},  $\{\rho_{km+r}^{ab}(\mathbf{u})\}_{m\geq 1}$ is $k$-automatic for $r\geq 2$. This completes the proof since the leading $k-1$ terms can not affect the $k$-automatic property.
\end{proof}

Following from Lemma \ref{lem:tkboudary}, $\partial \mathcal{F}_{n}(\mathbf{t}^{(k)})$ is periodic, which is a special $\ell$-automatic sequence for any $\ell\geq 2.$ This implies that  Corollary \ref{cor:1} holds by Theorem \ref{thm:main2}. {\red Since we} know that the key point for Theorem \ref{thm:main2} is the boundary {\red sequences}, it is significant to consider the infinite sequences {\red for which the corresponding boundary sequence is automatic.  Naturally we have the following Conjecture \ref{conj:1}. As far as we know, this conjecture holds} for the periodic sequences, the above $\mathbf{t}^{(k)}$ and Cantor sequence $c$ which is the fixed point of the morphism $0\mapsto 000,1\mapsto 101$.
\begin{conjecture}\label{conj:1}
For every $k$-automatic sequence $\mathbf{w}=\{w_n\}_{n\geq 0}\in \mathcal{A}^{\mathbb{N}},$ {\red the corresponding boundary sequence} $\{\partial\mathcal{F}_n(\mathbf{w})\}_{n\geq 2} \in (2^{\mathcal{A}\times \mathcal{A}})^{\mathbb{N}}$ is also $k$-automatic.
\end{conjecture}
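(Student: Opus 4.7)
The plan is to apply the Büchi--Bruyère--Hansel--Michaux--Villemaire (BHMV) theorem, which asserts that a subset of $\mathbb{N}^d$ is $k$-automatic if and only if it is first-order definable in the structure $(\mathbb{N}, +, V_k)$, where $V_k(n)$ denotes the largest power of $k$ dividing $n$. Since $\{\partial\mathcal{F}_n(\mathbf{w})\}_{n\geq 2}$ takes values in the finite set $\Sigma := 2^{\mathcal{A}\times\mathcal{A}}$, proving its $k$-automaticity reduces to showing that for each target value $T \in \Sigma$, the fiber $\{n \geq 2 : \partial\mathcal{F}_n(\mathbf{w}) = T\}$ is a $k$-automatic subset of $\mathbb{N}$; this in turn reduces, by Boolean combination, to showing that for each pair $(a,b) \in \mathcal{A}\times\mathcal{A}$, the set $B_{a,b} := \{n \geq 2 : (a,b) \in \partial\mathcal{F}_n(\mathbf{w})\}$ is $k$-automatic.

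To handle the sets $B_{a,b}$, first I would use the hypothesis that $\mathbf{w}$ is $k$-automatic to obtain, for each letter $a \in \mathcal{A}$, a first-order formula $\phi_a(x)$ over $(\mathbb{N}, +, V_k)$ defining the fiber $L_a = \{n : w_n = a\}$; the existence of $\phi_a$ is one direction of BHMV. Next I would translate the boundary condition combinatorially: $(a,b) \in \partial\mathcal{F}_n(\mathbf{w})$ iff some length-$n$ factor starts with $a$ and ends with $b$, equivalently iff there exists $i \geq 0$ with $w_i = a$ and $w_{i+n-1} = b$. Therefore
\[ B_{a,b} = \{\, n \geq 2 \,:\, \exists i\;(\phi_a(i) \wedge \phi_b(i + n - 1))\,\}, \]
which is first-order definable in $(\mathbb{N}, +, V_k)$ because $+$ is in the language and existential quantification is allowed. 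The converse direction of BHMV then gives that $B_{a,b}$ is $k$-automatic, and intersecting/complementing the finitely many $B_{a,b}$ yields each fiber $\{n : \partial\mathcal{F}_n(\mathbf{w}) = T\}$ as a $k$-automatic set. The conjecture follows.

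The main obstacle is that this argument outsources the heavy lifting to BHMV, a substantial external theorem not cited in the present paper. A self-contained alternative would construct, for each pair $(a,b)$, a nondeterministic finite automaton reading $n$ in base $k$ that guesses a witnessing index $i$ digit by digit, simulating in parallel the underlying DFAO of $\mathbf{w}$ on the positions $i$ and $i + n - 1$. The delicate point there is synchronizing the two simulations across the carry propagated by the addition $i + (n-1)$, while accommodating the fact that the base-$k$ representations of $i$ and of $i+n-1$ may have more digits than that of $n$ (handled by left-padding with zeros up to a bound extractable from the automaton). Either route would confirm the conjecture; the logical one is short but conceptual, whereas the combinatorial one is essentially a direct encoding of the relevant fragment of BHMV.
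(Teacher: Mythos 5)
This statement is left as an open conjecture in the paper: the authors supply no proof, only the observation that it holds for periodic sequences, for $\mathbf{t}^{(k)}$ (where by Lemma~\ref{lem:tkboudary} the boundary sequence is eventually constant), and for the Cantor sequence. So there is nothing in the paper to compare your argument against; judged on its own, your proof is correct and complete, and it actually settles the conjecture. The reduction from $k$-automaticity of the $2^{\mathcal{A}\times\mathcal{A}}$-valued sequence to $k$-recognizability of the finitely many sets $B_{a,b}$ is the standard fiber characterization of automatic sequences together with closure of $k$-recognizable sets under Boolean operations; the identification $(a,b)\in\partial\mathcal{F}_n(\mathbf{w})\iff\exists i\,(w_i=a\wedge w_{i+n-1}=b)$ matches the paper's definition of the boundary set; and the defining formula is first-order over $(\mathbb{N},+,V_k)$ once you rewrite $i+n-1$ as $\exists j\,(j+1=i+n\wedge\phi_b(j))$ to avoid subtraction, so both directions of the B\"uchi--Bruy\`ere theorem apply exactly as you invoke them. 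This logical route is in fact essentially how the conjecture has since been resolved in the literature (Rigo, Stipulanti and Whiteland, \emph{On the boundary sequence of an automatic sequence}, around 2022). The only caveat is that the entire weight of the argument rests on the B\"uchi--Bruy\`ere--Hansel--Michaux--Villemaire theorem, a substantial external result not cited in this paper; your sketched automaton-theoretic alternative (an NFA guessing the witness $i$ digit by digit while tracking the carry in $i+n-1$) would make the proof self-contained, but as you note it is not carried out, so the written proof stands or falls with that standard theorem --- which is safe.
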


\section*{References}

\end{document}